\newtheorem{theorem}{Theorem}[section]
\newtheorem{prop}[theorem]{Proposition}
\theoremstyle{definition}
\theoremstyle{remark}
\newtheorem{remark}[theorem]{Remark}
\numberwithin{equation}{section}
\begin{document}

\newcommand{\spacing}[1]{\renewcommand{\baselinestretch}{#1}\large\normalsize}
\spacing{1.14}

\title{On the left invariant Randers and Matsumoto metrics of Berwald type on 3-dimensional Lie groups}

\author {H. R. Salimi Moghaddam}

\address{Department of Mathematics\\ Faculty of  Sciences\\ University of Isfahan \\ Isfahan\\ 81746-73441-Iran.}
\email{salimi.moghaddam@gmail.com and hr.salimi@sci.ui.ac.ir}

\keywords{invariant metric, flag curvature,
Berwald space, Randers metric, Matsumoto metric, 3-dimensional Lie group  \\
AMS 2010 Mathematics Subject Classification: 22E60, 53C60.}


\begin{abstract}
In this paper we identify all simply connected 3-dimensional real Lie groups which admit Randers or Matsumoto metrics of Berwald type with a certain underlying left invariant Riemannian metric. Then we give their flag curvatures formulas explicitly.
\end{abstract}

\maketitle
\section{\textbf{Introduction}}\label{Intro}
Let $M$ be a smooth $n-$dimensional manifold and $TM$ be its
tangent bundle. A Finsler metric on $M$ is a non-negative function
$F:TM\longrightarrow \Bbb{R}$ which has the following properties:
\begin{enumerate}
    \item $F$ is smooth on the slit tangent bundle
    $TM^0:=TM\setminus\{0\}$,
    \item $F(x,\lambda y)=\lambda F(x,y)$ for any $x\in M$, $y\in T_xM$ and $\lambda
    >0$,
    \item the $n\times n$ Hessian matrix $[g_{ij}(x,y)]=[\frac{1}{2}\frac{\partial^2 F^2}{\partial y^i\partial
    y^j}]$ is positive definite at every point $(x,y)\in TM^0$.
\end{enumerate}
An important family of Finsler metrics is the family of
$(\alpha,\beta)-$metrics. These metrics are introduced by M.
Matsumoto (see \cite{Ma2}). Two families of interesting and important examples
of $(\alpha,\beta)-$metrics are Randers metrics $\alpha+\beta$,
and Matsumoto metrics $\frac{\alpha^2}{\alpha-\beta}$, where
$\alpha(x,y)=\sqrt{g_{ij}(x)y^iy^j}$ and $\beta(x,y)=b_i(x)y^i$
and $g$ and $\beta$ are a Riemannian metric and  a 1-form respectively
as follows:
\begin{eqnarray}
  g&=&g_{ij}dx^i\otimes dx^j \\
  \beta&=&b_idx^i.
\end{eqnarray}
These metrics occur naturally in physical applications. For example, in four-dimensional case, consider the above Randers metric as
\begin{equation}\label{electric charge test}
    F(x,\dot{x})=(g_{ij}(x)\dot{x}^i\dot{x}^j)^{\frac{1}{2}}+b_i(x)\dot{x}^i,
\end{equation}
where $\dot{x}^i=\frac{dx^i}{dq}$ and $q$ is a parameter along a curve. Now suppose that $g_{ij}(x)$ is the \\
(pseudo-)Riemannian metric tensor of space-time and
\begin{equation}\label{1-form}
    b_i(x)=\frac{e}{m_0c^2}A_i(x),
\end{equation}
where $e$ and $m_0$ are the electric charge and the rest mass of a test particle respectively, $c$ is the light velocity, and $A_i(x)$ denotes the electromagnetic vector potential. In this case the expression (\ref{electric charge test}) is, up to a constant factor, the Lagrangian function of a test electric charge in the electromagnetic and gravitational fields described by the vector potential $A_i(x)$ and the Riemannian metric tensor
$g_{ij}(x)$ (see \cite{As}.).\\
Randers metrics were introduced by G. Randers in 1941 (\cite{Ra}) when he was working on general relativity. On the other hand in the Matsumoto metric, the 1-form $\beta=b_idx^i$ was originally to be induced by earth's gravity (see \cite{AnInMa} or \cite{Ma1}.).\\

It has been shown that $\alpha+\beta$  (and in a similar way $\frac{\alpha^2}{\alpha-\beta}$) is
a Finsler metric if and only if $\|\beta_x\|_\alpha=\sqrt{b_i(x)b^i(x)}<1$, (for $\frac{\alpha^2}{\alpha-\beta}$, $\|\beta_x\|_\alpha=\sqrt{b_i(x)b^i(x)}<\frac{1}{2}$), where
$b^i(x)=g^{ij}(x)b_j(x)$ and $[g^{ij}(x)]$ is the inverse matrix
of $[g_{ij}(x)]$ (for more details see \cite{AnDe} and \cite{ChSh}).\\
The Riemannian metric $g$ induces an inner
product on any cotangent space $T^\ast_xM$ such that
$<dx^i(x),dx^j(x)>=g^{ij}(x)$. The induced inner product on
$T^\ast_xM$ induces a linear isomorphism between $T^\ast_xM$ and
$T_xM$ (see \cite{DeHo}.). Then the 1-form $\beta$
corresponds to a vector field $\tilde{X}$ on $M$ such that
\begin{eqnarray}
  g(y,\tilde{X}(x))=\beta(x,y).
\end{eqnarray}

Therefore we can write the Randers and Matsumoto metrics as follows:
\begin{eqnarray}
  F(x,y)&=&\alpha(x,y)+g(\tilde{X}(x),y),\label{Randers}\\
  F(x,y)&=&\frac{\alpha(x,y)^2}{\alpha(x,y)-g(\tilde{X}(x),y)}\label{Matsumoto}.
\end{eqnarray}

One of geometric quantities which can be used in classification of Finsler metrics is flag curvature. This quantity is a generalization of the concept of sectional curvature in Riemannian geometry. Flag
curvature is computed by the following formula:
\begin{eqnarray}\label{flag curvature}
  K(P,Y)=\frac{g_Y(R(U,Y)Y,U)}{g_Y(Y,Y).g_Y(U,U)-g_Y^2(Y,U)},
\end{eqnarray}
where $g_Y(U,V)=\frac{1}{2}\frac{\partial^2}{\partial s\partial
t}(F^2(Y+sU+tV))|_{s=t=0}$, $P=span\{U,Y\}$,
$R(U,Y)Y=\nabla_U\nabla_YY-\nabla_Y\nabla_UY-\nabla_{[U,Y]}Y$ and
$\nabla$ is the Chern connection induced by $F$ (see \cite{BaChSh}
and \cite{Sh}.).

A Riemannian Metric $g$ on the Lie group $G$ is called left
invariant if
\begin{eqnarray}
  g(x)(y,z)=g(e)(T_xl_{x^{-1}}y,T_xl_{x^{-1}}z) \ \ \ \ \forall x\in
  G, \forall y,z\in T_xG,
\end{eqnarray}
where $e$ is the unit element of $G$.\\
Similar to the Riemannian case, a Finsler metric is called left
invariant if
\begin{eqnarray}
  F(x,y)=F(e,T_xl_{x^{-1}}y).
\end{eqnarray}
In our previous paper (\cite{Sa3}) we studied invariant Matsumoto metrics on homogeneous spaces and gave the flag curvature formula of them.
In this paper we identify all simply connected 3-dimensional real Lie groups which admit Randers or Matsumoto metrics of
Berwald type with a certain underlying left invariant Riemannian metric. Then we give their flag curvatures formulas explicitly.

\section{\textbf{Left invariant Randers and Matsumoto metrics of Berwald type on 3-dimensional Lie groups}}\label{Main}
The following proposition gives a method for constructing left invariant Randers or Matsumoto metrics on Lie groups.

\begin{prop}\label{existence}
Let $<,>$ be any left invariant Riemannian metric on a Lie group
$G$. A function $F$ of the form (\ref{Randers}) or (\ref{Matsumoto}) is a left invariant Finsler metric
(Randers or Matsumoto metric, respectively) if and only if the vector field $\tilde{X}$ is a left
invariant vector field and, $\|\tilde{X}(x)\|_\alpha=\sqrt{g(\tilde{X}(x),\tilde{X}(x))}<1$ (for Randers metric) or $\|\tilde{X}(x)\|_\alpha<\frac{1}{2}$ (for Matsumoto metric), for any $x\in G$.
\end{prop}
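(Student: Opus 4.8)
The plan is to decouple the two assertions hidden in the statement: the norm bound is what makes $F$ a Finsler metric at all, whereas left invariance of $\tilde X$ is what makes $F$ left invariant. I would establish each part separately and then note how they interact in the Matsumoto case.

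For the first part, recall from the Introduction that $\alpha+\beta$ is a Finsler metric exactly when $\|\beta_x\|_\alpha<1$ for every $x$, and $\frac{\alpha^2}{\alpha-\beta}$ exactly when $\|\beta_x\|_\alpha<\tfrac12$. Since $\beta(x,y)=g(x)(\tilde X(x),y)$ and $b^i=g^{ij}b_j$, a one-line computation gives $\|\beta_x\|_\alpha^2=b_i(x)b^i(x)=g(x)(\tilde X(x),\tilde X(x))=\|\tilde X(x)\|_\alpha^2$, so the stated bound is precisely the Finsler condition. (If in addition $\tilde X$ is left invariant, then $x\mapsto\|\tilde X(x)\|_\alpha$ is constant because $g$ is, so the bound need only be checked at $e$; but this is a convenience, not something the equivalence requires.)

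The substance is the second part. Because $g=\langle,\rangle$ is left invariant, $\alpha(x,y)=\sqrt{g(x)(y,y)}=\sqrt{g(e)(T_xl_{x^{-1}}y,T_xl_{x^{-1}}y)}=\alpha(e,T_xl_{x^{-1}}y)$, so $\alpha$, and hence $\alpha^2$, is automatically left invariant. Consequently $F$ --- in the Randers form by subtracting $\alpha$, in the Matsumoto form by solving $\beta=\alpha-\alpha^2/F$ --- is left invariant if and only if the one-form $\beta(x,y)=g(x)(\tilde X(x),y)$ is left invariant. Now left invariance of $g$ gives $\beta(x,y)=g(e)(T_xl_{x^{-1}}\tilde X(x),\,T_xl_{x^{-1}}y)$, whereas $\beta$ being left invariant means $\beta(x,y)=\beta(e,T_xl_{x^{-1}}y)=g(e)(\tilde X(e),\,T_xl_{x^{-1}}y)$. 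Since $T_xl_{x^{-1}}\colon T_xG\to T_eG$ is a linear isomorphism, $T_xl_{x^{-1}}y$ exhausts $T_eG$ as $y$ varies over $T_xG$, so comparing the two formulas and using nondegeneracy of $g(e)$ yields $T_xl_{x^{-1}}\tilde X(x)=\tilde X(e)$ for all $x$, i.e. $\tilde X$ is left invariant; reversing the chain of identities gives the converse.

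I do not expect a real obstacle --- the argument is essentially formal manipulation of the left-translation identity together with nondegeneracy of $g$. The only point deserving care is the Matsumoto case, where the rearrangement $\beta=\alpha-\alpha^2/F$ is legitimate only once $F$ is known to be a bona fide Finsler metric (in particular nowhere vanishing on $TM^0$), which is exactly what the bound $\|\tilde X(x)\|_\alpha<\tfrac12$ supplies; so there the two halves of the proposition are genuinely intertwined and should be organized together rather than proved in series.
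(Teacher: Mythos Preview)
Your argument is correct. The paper's own proof is a one-line citation --- ``With attention to proposition~3.6 of \cite{Sa2}, it is clear'' --- so you have in effect reconstructed the content of that external reference. Your decomposition (norm bound $\Leftrightarrow$ Finsler condition; left invariance of $\tilde X$ $\Leftrightarrow$ left invariance of $F$, via left invariance of $\alpha$ and nondegeneracy of $g$) is the natural one, and your care about recovering $\beta$ from $F$ in the Matsumoto case is a genuine point that the bare citation obscures. In short, the paper defers the work to \cite{Sa2}; you supply it directly, which is more self-contained but not a different strategy.
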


\begin{proof}
With attention to proposition 3.6 of \cite{Sa2}, it is clear.
\end{proof}

\begin{theorem}\label{Classification}
Let $G$ be a simply connected 3-dimensional real Lie group equipped with a left invariant Riemannian metric $g$ and $\frak{g}$ denotes the Lie algebra of $G$. Then $G$ admits a left invariant Randers or Matsumoto metrics of Berwald type with the underlying  Riemannian metric $g$ if and only if $G$, $\frak{g}$ and the associated matrix of $g$, with respect to the base $\{x,y,z\}$ of $\frak{g}$ and up to automorphism,  be as follows:

\begin{description}
  \item[i] $G=\Bbb{R}^3$, $[x,y]=0,$ $[x,z]=0,$ $[y,z]=0$, $\left(
                                                                         \begin{array}{ccc}
                                                                           1 & 0 & 0 \\
                                                                           0 & 1 & 0 \\
                                                                           0 & 0 & 1 \\
                                                                         \end{array}
                                                           \right)$,\\
  \item[ii] $G=$ \mbox{The solvable Lie group} $\tilde{E}_0(2)$, $[x,y]=0,$ $[x,z]=y,$ $[y,z]=-x$, $\left(
                                                                         \begin{array}{ccc}
                                                                           1 & 0 & 0 \\
                                                                           0 & 1 & 0 \\
                                                                           0 & 0 & \nu \\
                                                                         \end{array}
                                                        \right)$, where $\nu>0$,\\
  \item[iii] $G=$ \mbox{The non-unimodular Lie group} $G_c$, $[x,y]=0,$ $[x,z]=-y,$ $[y,z]=cx-2y$, $\left(
                                                                         \begin{array}{ccc}
                                                                           1 & \frac{1}{2} & 0 \\
                                                                           \frac{1}{2} & 1 & 0 \\
                                                                           0 & 0 & \nu \\
                                                                         \end{array}
                                                        \right)$, where $\nu>0 ,$ $c=0$.
\end{description}
\end{theorem}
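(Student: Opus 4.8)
The plan is to exploit the classical fact that an $(\alpha,\beta)$-metric of the form (\ref{Randers}) or (\ref{Matsumoto}) is of Berwald type precisely when the one-form $\beta$ (equivalently the vector field $\tilde X$) is parallel with respect to the Levi-Civita connection of the underlying Riemannian metric $\alpha = g$. Combined with Proposition~\ref{existence}, this reduces the whole classification to the following purely Riemannian problem: \emph{determine all left invariant Riemannian metrics $g$ on simply connected $3$-dimensional Lie groups $G$ that admit a nonzero left invariant vector field $\tilde X$ which is parallel for the Levi-Civita connection $\nabla$ of $g$.} (One records separately the norm conditions $\|\tilde X\|_\alpha<1$, resp. $<\tfrac12$, from Proposition~\ref{existence}; since $\tilde X$ may be rescaled, these impose no restriction on which groups occur, only on the admissible metrics in each family.) Thus the theorem is really a statement about the existence of a parallel left invariant vector field.

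The first step is to set up the Koszul formula on the Lie algebra $\frak g$: for left invariant fields, $2\langle\nabla_u v,w\rangle = \langle[u,v],w\rangle - \langle[v,w],u\rangle + \langle[w,u],v\rangle$. A left invariant $\tilde X$ is parallel iff $\nabla_u \tilde X = 0$ for all $u\in\frak g$, i.e. iff $\langle[u,\tilde X],w\rangle + \langle[w,u],\tilde X\rangle + \langle[w,\tilde X],u\rangle = 0$ for all $u,w$. Taking $u=w$ gives $\langle[u,\tilde X],u\rangle=0$ for all $u$, so $\mathrm{ad}_{\tilde X}$ is skew-symmetric; feeding this back shows moreover that $\langle[w,u],\tilde X\rangle = 0$ for all $u,w$, i.e. $\tilde X \perp [\frak g,\frak g]$. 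So a parallel left invariant $\tilde X$ exists iff there is a nonzero $\tilde X$ orthogonal to the derived algebra with $\mathrm{ad}_{\tilde X}$ skew-symmetric. The second step is to run this criterion through Milnor's classification (\cite{Milnor}) of $3$-dimensional Lie algebras with inner product: go case by case over unimodular algebras (via the Milnor frame diagonalizing $L(u)=u\times(\text{structure})$, with eigenvalues $\lambda_1,\lambda_2,\lambda_3$) and non-unimodular algebras. In each case one writes down $[\frak g,\frak g]$, picks a candidate $\tilde X$ in its orthogonal complement, and imposes skew-symmetry of $\mathrm{ad}_{\tilde X}$; this forces relations among the structure constants and pins down $g$ up to the stated normal forms. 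The abelian case trivially gives (i) with $\tilde X$ anything; the computation should cut the unimodular non-abelian list down to $\tilde E_0(2)$ with the metric $\mathrm{diag}(1,1,\nu)$ and $\tilde X$ along the $z$-axis (i.e. $\lambda_1=\lambda_2$, $\lambda_3=0$ case), and the non-unimodular list down to $G_c$ with $c=0$ and the indicated metric.

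The final step is to compute the flag curvature. Here I would invoke the Berwald property decisively: for a Berwald metric the Chern connection coincides with the Levi-Civita connection of $\alpha$, so $R(U,Y)Y$ in (\ref{flag curvature}) is exactly the Riemannian curvature tensor of $g$, which for each of the three algebras is easy to compute from the Koszul formula in the left invariant frame. The only genuinely Finslerian ingredient left is the fundamental tensor $g_Y$ appearing in the denominator and in the pairing in the numerator; for Randers $F=\alpha+\beta$ and Matsumoto $F=\alpha^2/(\alpha-\beta)$ with $\beta$ parallel, $g_Y$ has a standard closed form in terms of $\alpha$, $\beta$, $g$ and $Y$, which one substitutes to get the explicit formula. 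Since each $\frak g$ in the list has abelian derived algebra with the parallel direction orthogonal to it, the curvature computations are short; I expect to present $K(P,Y)$ as an explicit rational expression in the components of $Y$ and $U$ and the parameter $\nu$ for cases (ii) and (iii), and $K\equiv 0$ for case (i).

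The main obstacle is the bookkeeping in the second step: one must be careful that "up to automorphism" is handled correctly, since a single abstract Lie algebra carries a moduli of inner products and one wants the statement to list each $(G,\frak g,g)$ only once in normal form. Concretely, after the skew-symmetry condition forces a relation on the structure constants, one still has to diagonalize/normalize the remaining freedom in $g$ by an automorphism of $\frak g$, and verify that no two of the three listed cases are equivalent and that nothing has been missed — in particular checking that the non-unimodular families $G_c$ for $c\neq 0$, and the unimodular algebras $\mathfrak{su}(2)$, $\mathfrak{sl}(2,\Bbb R)$, $\mathfrak{heis}$, $\mathfrak e(1,1)$, genuinely admit no parallel left invariant field for any left invariant metric. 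This is a finite but somewhat delicate case analysis; everything else is routine Koszul-formula computation.
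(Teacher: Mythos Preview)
Your reduction is correct and your criterion is clean: a left invariant $\tilde X$ is parallel for the Levi-Civita connection of $g$ iff $\tilde X\perp[\frak g,\frak g]$ and $\mathrm{ad}_{\tilde X}$ is $g$-skew, and this is exactly what one must test. The paper, however, does not isolate this criterion. Instead it imports the Ha--Lee classification \cite{HaLee} of left invariant metrics on simply connected $3$-dimensional Lie groups, which already presents fifteen normal forms $(\frak g,\{x,y,z\},g)$ with the metric matrix written down explicitly; it then tabulates $\nabla_x x,\nabla_x y,\dots,\nabla_z z$ in each case and solves $\nabla_u\tilde X=0$ directly for $\tilde X=px+qy+rz$, finding that only cases $1$, $5$ with $\mu=1$, and $11$ admit a nonzero solution. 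Your route via Milnor's frame is more conceptual and avoids the large table, but buys you a translation problem you have correctly flagged: the theorem is stated in Ha--Lee normal forms, so after the Milnor analysis you must still identify your surviving metrics with the specific matrices listed (for instance, the $\tilde E_0(2)$ condition $\lambda_1=\lambda_2$ in your Milnor frame becomes $\mu=1$ in the paper's case~5, and the non-unimodular survivor must be matched to case~11 rather than cases~9, 10, 12--15). Finally, note that the flag curvature computation in your ``final step'' is not part of this theorem at all; the paper defers that to two subsequent theorems, so you should drop it from this proof.
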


\begin{proof}
We consider the classification of simply connected 3-dimensional real Lie groups given in \cite{HaLee}. Then, by some computations, we can compute  the Levi-Civita connection of each case with respect to the base $\{x,y,z\}$ as table (\ref{Levi-Civita connection}), where for case 15 we assumed that \begin{eqnarray}
A&:=&\frac{\lambda^2(1+\mu)+1-\mu}{2c^2\lambda^2},\\
B&:=&\frac{1-\mu}{2c\lambda^2},\\
D&:=&\frac{1-\mu}{2\lambda^2}.
\end{eqnarray}
We know that (see \cite{BaChSh} and \cite{AnInMa}.) a Randers or Matsumoto metric is of Berwald type if and only if the vector field $\tilde{X}$ (considered in (\ref{Randers}) and (\ref{Matsumoto})) is parallel with respect to the Levi-Civita connection of the Riemannian metric $g$ and, $\|\tilde{X}\|_\alpha<1$ (for Randers metric) or $\|\tilde{X}\|_\alpha<\frac{1}{2}$ (for Matsumoto metric). Therefore it is sufficient to find the cases which admit a parallel left invariant vector field $\tilde{X}$ between these $15$ cases. We can see in case $1$ every left invariant vector field $\tilde{X}$ is parallel with respect to the Levi-Civita connection of $g$, so for any left invariant vector field $\tilde{X}$ with $\|\tilde{X}\|_\alpha<1$ we have a Randers metric (and with $\|\tilde{X}\|_\alpha<\frac{1}{2}$ a Matsumoto metric). By a direct computation and using table (\ref{Levi-Civita connection}) in case $5$, $G$ admits such left invariant vector field $\tilde{X}$ if and only if $\mu=1$. In this case the left invariant vector fields of the form $\tilde{X}=pz$, $p\in\Bbb{R}$, are parallel with respect to the Levi-Civita connection of $g$. Therefore, in Randers case, it is sufficient to consider $\|\tilde{X}\|_\alpha=\sqrt{g(pz,pz)}=|p|\sqrt{\nu}<1$ or $|p|<\frac{1}{\sqrt{\nu}}$. 
\fontsize{6}{0}{\selectfont
\begin{landscape}
\begin{table}
    \centering\caption{The Levi-Civita connection of simply connected 3-dimensional real Lie groups}\label{Levi-Civita connection}
        \begin{tabular}{|p{0.5cm}|p{1.5cm}|p{1.5cm}|p{2cm}|p{1cm}|p{1cm}|p{1cm}|p{1.5cm}|p{1cm}|p{1cm}|p{2.5cm}|p{1.5cm}|p{1.5cm}|p{0.5cm}|}
        \hline
            case & Lie algebra structure & Associated  simply connected  Lie group & Left invariant Riemannian metric & Conditions for  parameters & \hspace{3cm}$\nabla_xx$ & \hspace{3cm}$\nabla_xy$ & \hspace{3cm}$\nabla_xz$ & \hspace{3cm}$\nabla_yx$ & \hspace{3cm}$\nabla_yy$ & \hspace{3cm}$\nabla_yz$ & \hspace{3cm}$\nabla_zx$ & \hspace{3cm}$\nabla_zy$ & \hspace{3cm}$\nabla_zz$ \\
             \hline
           case 1 & $[x,y]=0,$ $[x,z]=0,$ $[y,z]=0$ & $\Bbb{R}^3$ & $\left(
                                                                         \begin{array}{ccc}
                                                                           1 & 0 & 0 \\
                                                                           0 & 1 & 0 \\
                                                                           0 & 0 & 1 \\
                                                                         \end{array}
                                                                       \right)$ & - & 0 & 0 & 0 & 0 & 0 & 0 & 0 & 0 & 0  \\
            \hline
           case 2 & $[x,y]=z,$ $[x,z]=0,$ $[y,z]=0$ & The Heisenberg group $Nil$ & $\left(
                                                                         \begin{array}{ccc}
                                                                           \lambda & 0 & 0 \\
                                                                           0 & \lambda & 0 \\
                                                                           0 & 0 & 1 \\
                                                                         \end{array}
                                                                       \right)$ & $\lambda>0$ & 0 & $\frac{1}{2}z$ & $\frac{-1}{2\lambda}y$ & $-\frac{1}{2} z$ & 0 & $\frac{1}{2\lambda}x$ & $-\frac{1}{2\lambda}y$ & $\frac{1}{2\lambda}x$ & 0  \\
            \hline
            case 3 & $[x,y]=0,$ $[x,z]=-x,$ $[y,z]=y$ & The solvable Lie group $Sol$ & $\left(
                                                                         \begin{array}{ccc}
                                                                           1 & 0 & 0 \\
                                                                           0 & 1 & 0 \\
                                                                           0 & 0 & \nu \\
                                                                         \end{array}
                                                                       \right)$ & $\nu>0$ & $\frac{z}{\nu}$ & 0 & $-x$ & 0 & -$\frac{z}{\nu}$ & $y$ & 0 & 0 & 0  \\
            \hline
            case 4 & $[x,y]=0,$ $[x,z]=-x,$ $[y,z]=y$ & The solvable Lie group $Sol$ & $\left(
                                                                         \begin{array}{ccc}
                                                                           1 & 1 & 0 \\
                                                                           1 & \mu & 0 \\
                                                                           0 & 0 & \nu \\
                                                                         \end{array}
                                                                       \right)$ & $\mu>1 , \nu>0$ & $\frac{z}{\nu}$ & 0 & $\frac{\mu}{1-\mu}x-\frac{1}{1-\mu}y$ & 0 & -$\frac{\mu}{\nu}z$ & $\frac{\mu}{1-\mu}x-\frac{\mu}{1-\mu}y$ & $\frac{1}{1-\mu}x-\frac{1}{1-\mu}y$ & $\frac{\mu}{1-\mu}x-\frac{1}{1-\mu}y$ & 0  \\
            \hline
            case 5 & $[x,y]=0,$ $[x,z]=y,$ $[y,z]=-x$ & The solvable Lie group $\tilde{E}_0(2)$ & $\left(
                                                                         \begin{array}{ccc}
                                                                           1 & 0 & 0 \\
                                                                           0 & \mu & 0 \\
                                                                           0 & 0 & \nu \\
                                                                         \end{array}
                                                                       \right)$ & $0<\mu\leq1$ , $\nu>0$ & 0 & $\frac{1-\mu}{2\nu}z$ & $\frac{\mu-1}{2\mu}y$ & $\frac{1-\mu}{2\nu}z$ & 0 & $\frac{\mu-1}{2}x$ & -$\frac{1+\mu}{2\mu}y$ & $\frac{1+\mu}{2}x$ & 0 \\
            \hline
            case 6 & $[x,y]=2z,$ $[x,z]=-2y,$ $[y,z]=-2x$ & The simple Lie group $\widetilde{PSL}(2,\Bbb{R})$ & $\left(
                                                                         \begin{array}{ccc}
                                                                           \lambda & 0 & 0 \\
                                                                           0 & \mu & 0 \\
                                                                           0 & 0 & \nu \\
                                                                         \end{array}
                                                                       \right)$ & $\mu\geq\nu>0$ , $\lambda>0$ & 0 & $\frac{\lambda+\mu+\nu}{\nu}z$ & $-\frac{\lambda+\mu+\nu}{\mu}y$ & $\frac{\lambda+\mu-\nu}{\nu}z$ & 0 & $\frac{-\lambda-\mu+\nu}{\lambda}x$ & $\frac{-\lambda+\mu-\nu}{\mu}y$ & $\frac{\lambda-\mu+\nu}{\lambda}x$ & 0 \\
            \hline
            case 7 & $[x,y]=z,$ $[x,z]=-y,$ $[y,z]=x$ & The simple Lie group $SU(2)$ & $\left(
                                                                         \begin{array}{ccc}
                                                                           \lambda & 0 & 0 \\
                                                                           0 & \mu & 0 \\
                                                                           0 & 0 & \nu \\
                                                                         \end{array}
                                                                       \right)$ & $\lambda\geq\mu\geq\nu>0$ & 0 & $\frac{-\lambda+\mu+\nu}{2\nu}z$ & $\frac{\lambda-\mu-\nu}{2\mu}y$ & $\frac{-\lambda+\mu-\nu}{2\nu}z$ & 0 & $\frac{\lambda-\mu+\nu}{2\lambda}x$ & $\frac{\lambda+\mu-\nu}{2\mu}y$ & $\frac{-\lambda-\mu+\nu}{2\lambda}x$ & 0 \\
            \hline
            case 8 & $[x,y]=0,$ $[x,z]=-x,$ $[y,z]=-y$ & The non-unimodular Lie group $G_I$ & $\left(
                                                                         \begin{array}{ccc}
                                                                           1 & 0 & 0 \\
                                                                           0 & 1 & 0 \\
                                                                           0 & 0 & \nu \\
                                                                         \end{array}
                                                                       \right)$ & $\nu>0$ & $\frac{z}{\nu}$ & 0 & $-x$ & 0 & $\frac{z}{\nu}$ & $-y$ & 0 & 0 & 0 \\
            \hline
             case 9 & $[x,y]=0,$ $[x,z]=-y,$ $[y,z]=cx-2y$ & The non-unimodular Lie group $G_c$ & $\left(
                                                                         \begin{array}{ccc}
                                                                           1 & 0 & 0 \\
                                                                           0 & \mu & 0 \\
                                                                           0 & 0 & \nu \\
                                                                         \end{array}
                                                                       \right)$ & $0<\mu\leq|c|$ , $\nu>0$ & 0 & $\frac{\mu-c}{2\nu}z$ & $\frac{c-\mu}{2\mu}y$& $\frac{\mu-c}{2\nu}z$ & $\frac{2\mu}{\nu}z$ & $\frac{c-\mu}{2}x-2y$ & $\frac{c+\mu}{2\mu}y$ & -$\frac{c+\mu}{2}x$ & 0  \\
        \hline
        case 10 & $[x,y]=0,$ $[x,z]=-y,$ $[y,z]=cx-2y$ & The non-unimodular Lie group $G_c$ & $\left(
                                                                         \begin{array}{ccc}
                                                                           1 & 0 & 0 \\
                                                                           0 & \mu & 0 \\
                                                                           0 & 0 & \nu \\
                                                                         \end{array}
                                                                       \right)$ & $\mu , \nu>0 ,$ $c=0$ & 0 & $\frac{\mu}{2\nu}z$ & -$\frac{1}{2}y$ & $\frac{\mu}{2\nu}z$ & $\frac{2\mu}{\nu}z$ & $-\frac{\mu}{2}x-2y$ & $\frac{1}{2}y$ & -$\frac{\mu}{2}x$ & 0  \\
        \hline
        case 11 & $[x,y]=0,$ $[x,z]=-y,$ $[y,z]=cx-2y$ & The non-unimodular Lie group $G_c$ & $\left(
                                                                         \begin{array}{ccc}
                                                                           1 & \frac{1}{2} & 0 \\
                                                                           \frac{1}{2} & 1 & 0 \\
                                                                           0 & 0 & \nu \\
                                                                         \end{array}
                                                                       \right)$ & $\nu>0 ,$ $c=0$ & $\frac{z}{2\nu}$ & $\frac{z}{\nu}$ & -$y$ & $\frac{z}{\nu}$ & $\frac{2z}{\nu}$ & -$2y$ & 0 & 0 & 0   \\
        \hline
        case 12 & $[x,y]=0,$ $[x,z]=-y,$ $[y,z]=cx-2y$ & The non-unimodular Lie group $G_c$ & $\left(
                                                                         \begin{array}{ccc}
                                                                           1 & 0 & 0 \\
                                                                           0 & \mu & 0 \\
                                                                           0 & 0 & \nu \\
                                                                         \end{array}
                                                                       \right)$ & $\nu>0 ,$ $c=1,$ $0<\mu\leq1$ & 0 & $\frac{\mu-1}{2\nu}z$ & $\frac{1-\mu}{2\mu}y$ & $\frac{\mu-1}{2\nu}z$ & $\frac{2\mu}{\nu}z$ & $\frac{1-\mu}{2}x-2y$ & $\frac{1+\mu}{2\mu}y$ & -$\frac{1+\mu}{2}x$ & 0   \\
        \hline
        case 13 & $[x,y]=0,$ $[x,z]=-y,$ $[y,z]=cx-2y$ & The non-unimodular Lie group $G_c$ & $\left(
                                                                         \begin{array}{ccc}
                                                                           1 & \lambda & 0 \\
                                                                           \lambda & 1 & 0 \\
                                                                           0 & 0 & \nu \\
                                                                         \end{array}
                                                                       \right)$ & $\nu>0 ,$ $c=1,$ $0<\mu\leq1,$ $0<\lambda<1$ & $\frac{\lambda}{\nu}z$ & $\frac{\lambda}{\nu}z$ & $-\frac{\lambda}{1+\lambda}(x+y)$ & $\frac{\lambda}{\nu}z$ & $\frac{2-\lambda}{\nu}z$ & $\frac{\lambda}{1+\lambda}x-\frac{2+\lambda}{1+\lambda}y$ & $-\frac{\lambda}{1+\lambda}x+\frac{1}{1+\lambda}y$ & $-\frac{1}{1+\lambda}x+\frac{\lambda}{1+\lambda}y$ & 0   \\
        \hline
        case 14 & $[x,y]=0,$ $[x,z]=-y,$ $[y,z]=cx-2y$ & The non-unimodular Lie group $G_c$ & $\left(
                                                                         \begin{array}{ccc}
                                                                           1 & 1 & 0 \\
                                                                           1 & \mu & 0 \\
                                                                           0 & 0 & \nu \\
                                                                         \end{array}
                                                                       \right)$ & $\nu>0 ,$ $c>1,$ $1<\mu\leq c$ & $\frac{z}{\nu}$ & $\frac{2+\mu-c}{2\nu}z$ & $\frac{-2+\mu+c}{2(1-\mu)}x+\frac{\mu-c}{2(1-\mu)}y$ & $\frac{2+\mu-c}{2\nu}z$ & $\frac{2\mu-c}{\nu}z$ & $\frac{-\mu(c+2)+2c+\mu^2}{2(1-\mu)}x+\frac{-2+3\mu-c}{2(1-\mu)}y$ & $\frac{-2+\mu+c}{2(1-\mu)}x+\frac{2-\mu-c}{2(1-\mu)}y$ & $\frac{\mu^2+\mu(c-2)}{2(1-\mu)}x+\frac{2-c-\mu}{2(1-\mu)}y$ & 0  \\
        \hline
         case 15 & $[x,y]=0,$ $[x,z]=-y,$ $[y,z]=cx-2y$ & The non-unimodular Lie group $G_c$ & $\left(
                                                                         \begin{array}{ccc}
                                                                           A & B & 0 \\
                                                                           B & D & 0 \\
                                                                           0 & 0 & \nu \\
                                                                         \end{array}
                                                                       \right)$ & $0\leq\mu<1$, $\nu>0$, $\lambda=\sqrt{1-c}$ & $\frac{1-\mu}{2c(1-c)\nu}z$ & $\frac{\mu-c}{2c(c-1)\nu}z$ & $\frac{-\mu}{1+\mu}x+\frac{c+c\mu-2\mu}{c(\mu^2-1)}y$ & $\frac{\mu-c}{2c(c-1)\nu}z$ & $\frac{\mu-1}{2(c-1)\nu}z$ & $\frac{c}{1+\mu}x-\frac{2+\mu}{1+\mu}y$ & $-\frac{\mu}{1+\mu}x+\frac{(-2+c+c\mu)\mu}{c(\mu^2-1)}y$ & $-\frac{c\mu}{1+\mu}x+\frac{\mu}{1+\mu}y$ & 0 \\
        \hline
        \end{tabular}
        \end{table}
\end{landscape}}
\fontsize{11}{0}
In a similar way, in Matsumoto case, it is sufficient to consider $\|\tilde{X}\|_\alpha<\frac{1}{2}$ or $|p|<\frac{1}{2\sqrt{\nu}}$. In case $11$ the Lie group $G$ admits a family of left invariant vector fields of the form $\tilde{X}=-2px+py$, $p\in{\Bbb{R}}$, which are parallel with respect to the Levi-Civita connection of $g$. Therefore, in this case, for constructing Randers metrics of Berwald type it is sufficient to let $\|\tilde{X}\|_\alpha=\sqrt{g(-2px+py,-2px+py)}=\sqrt{3}|p|<1$ or $|p|<\frac{\sqrt{3}}{3}$ and for constructing Matsumoto metrics of Berwald type it is sufficient to consider $|p|<\frac{\sqrt{3}}{6}$. By using table (\ref{Levi-Civita connection}) and a little complicated computation we can see in all other cases the Lie group $G$ does not admit a left invariant vector field $\tilde{X}$ such that it is parallel with respect to the Levi-Civita connection of $g$. Therefore proposition (\ref{existence}) guarantees that in other cases we have not any left invariant Randers or Matsumoto metric of Berwald type with the underlaying left invariant Riemannian metric $g$.
\end{proof}

Now we discuss about the flag curvatures of these metrics.

\begin{remark}
As I have shown in theorems $3.5$ and $3.8$ of \cite{Sa1} for Randers metrics (and with a similar way for Matsumoto metrics), the Finsler (Randers and Matsumoto) metrics considered in cases $(i)$ and $(ii)$ of theorem (\ref{Classification}) are flat geodesically complete locally Minkowskian Finsler metrics.
\end{remark}

Therefore it is sufficient to compute the flag curvature of Randers and Matsumoto metrics of case $(iii)$.

\begin{theorem}
Let $F=\alpha+\beta$ be the left invariant Randers metric derived from case $(iii)$ of theorem (\ref{Classification}). Suppose that $\{U=ax+by+cz,V=\tilde{a}x+\tilde{b}y+\tilde{c}z\}$ is an orthonormal set with respect to $g$. Then the flag curvature $K(P,U)$ of the flag $(P=span\{U,V\},U)$ is given by
\begin{equation}\label{flag of Randers}
    K(P,U)=-4(\frac{c(\tilde{a}+2\tilde{b})-\tilde{c}(a+2b)}{3pa-2})^2.
\end{equation}
Therefore $F$ is of non-positive flag curvature.
\end{theorem}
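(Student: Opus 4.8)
The plan is to use the Berwald hypothesis to collapse the Finslerian flag curvature into an essentially Riemannian quantity. Since $F=\alpha+\beta$ is of Berwald type, the Chern connection of $F$ coincides with the Levi--Civita connection $\nabla$ of $g$; hence the term $R(V,U)U$ in (\ref{flag curvature}) is the ordinary Riemannian curvature tensor of $g$, and, as established in the proof of Theorem \ref{Classification} (case $11$), the vector field $\tilde X=-2px+py$ defining $\beta$ is $\nabla$-parallel.

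First I would evaluate the fundamental tensor $g_U$ of the Randers metric at the flag pole $Y=U$. From the standard identity
\[
g_Y(P,Q)=\frac{F(Y)}{\alpha(Y)}\Big(g(P,Q)-\frac{g(Y,P)g(Y,Q)}{\alpha(Y)^2}\Big)+\Big(\frac{g(Y,P)}{\alpha(Y)}+g(\tilde X,P)\Big)\Big(\frac{g(Y,Q)}{\alpha(Y)}+g(\tilde X,Q)\Big),
\]
together with $\alpha(U)=1$, $g(U,U)=g(V,V)=1$ and $g(U,V)=0$, one obtains $g_U(U,U)=F(U)^2$, $g_U(V,V)=F(U)+g(\tilde X,V)^2$ and $g_U(U,V)=F(U)\,g(\tilde X,V)$, so the denominator of (\ref{flag curvature}) equals $F(U)^3$.

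The crucial point is the numerator. Because $\nabla\tilde X=0$ we have $R(\cdot,\cdot)\tilde X=0$, whence $g(R(V,U)U,\tilde X)=-g(R(V,U)\tilde X,U)=0$; moreover $g(R(V,U)U,U)=0$ by skew-symmetry of the operator $R(V,U)$. Substituting $W:=R(V,U)U$ into the identity above and using these two vanishings gives $g_U(W,V)=F(U)\,g(W,V)$, and by the pair symmetry of the Riemannian curvature tensor $g(R(V,U)U,V)$ is precisely the $g$-sectional curvature $\kappa$ of the plane $P=\operatorname{span}\{U,V\}$ spanned by the $g$-orthonormal pair $\{U,V\}$. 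Hence
\[
K(P,U)=\frac{F(U)\,\kappa}{F(U)^3}=\frac{\kappa}{F(U)^2}.
\]

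It then remains to make $F(U)$ and $\kappa$ explicit from the data of case $(iii)$. Using the associated matrix of $g$, $\beta(U)=g(\tilde X,U)=g(-2px+py,\,ax+by+cz)=-\tfrac{3}{2} pa$, so $F(U)=1-\tfrac{3}{2} pa$ and $F(U)^2=(3pa-2)^2/4$ (which is positive, since $\|\tilde X\|_\alpha=\sqrt3\,|p|<1$ forces $F(U)>0$). For $\kappa$, I would read the Levi--Civita connection of case $11$ off Table \ref{Levi-Civita connection}, compute the curvature operators on $\{x,y,z\}$ (one finds $R(x,y)=0$, while $R(x,z)$ and $R(y,z)$ map each of $x,y,z$ into $\operatorname{span}\{y,z\}$), expand $g(R(U,V)V,U)$ multilinearly, and observe that the result collapses to the perfect square
\[
\kappa=-\big(c(\tilde a+2\tilde b)-\tilde c(a+2b)\big)^2 .
\]
Plugging this into $K(P,U)=\kappa/F(U)^2$ gives (\ref{flag of Randers}), and non-positivity is immediate. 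The one genuinely laborious ingredient is the curvature computation of this last paragraph, and the only delicate moment there is recognizing the collapse into minus a square; everything preceding it is formal, resting solely on the Berwald property and the parallelism of $\tilde X$.
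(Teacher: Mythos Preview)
Your argument is correct and reaches the same formula, but you organize the computation differently from the paper. The paper proceeds by brute force: it first computes the Riemannian curvature tensor of case~11 explicitly from Table~\ref{Levi-Civita connection}, writes $R(V,U)U=\delta z+\sigma y$ with $\delta=\frac{1}{\nu}(a+2b)\eta$, $\sigma=-2c\eta$ (where $\eta=c(\tilde a+2\tilde b)-\tilde c(a+2b)$), then plugs into the $g_U$-formula from \cite{EsSa} to obtain $g_U(R(V,U)U,V)$, $g_U(U,U)$, $g_U(V,V)$, $g_U(U,V)$ one by one, and finally substitutes into (\ref{flag curvature}). Your route instead exploits a structural shortcut: the parallelism $\nabla\tilde X=0$ forces $g(R(V,U)U,\tilde X)=0$, which together with $g(R(V,U)U,U)=0$ collapses the numerator to $F(U)\,\kappa$ and the denominator to $F(U)^3$, yielding the clean identity $K(P,U)=\kappa/F(U)^2$ before any curvature computation. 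This makes the non-positivity and the ``square over square'' shape of (\ref{flag of Randers}) transparent a priori, and reduces the laborious part to evaluating the single Riemannian quantity $\kappa=-\eta^2$; the paper's direct approach requires no such observation but carries more algebra (indeed, the term $\delta c\nu+\sigma(b+\tfrac{a}{2})$ appearing in the paper's expression for $g_U(R(V,U)U,V)$ is exactly $g(R(V,U)U,U)$, whose vanishing you invoke at the outset).
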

\begin{proof}
$F$ is of Berwald type so the curvature tensors of $g$ and $F$ coincide. By using table (\ref{Levi-Civita connection}) we can compute the curvature tensor as follows
\begin{eqnarray}
  R(x,z)x &=& -R(z,x)x =\frac{1}{2}R(x,z)y=-\frac{1}{2}R(z,x)y=\frac{1}{2}R(y,z)x=-\frac{1}{2}R(z,y)x\\
  &&=\frac{1}{4}R(y,z)y=-\frac{1}{4}R(z,y)y=\frac{z}{\nu},\nonumber \\
 -\frac{1}{2}R(x,z)z&=& \frac{1}{2}R(z,x)z=-\frac{1}{4}R(y,z)z=\frac{1}{4}R(z,y)z=y,\nonumber
 \end{eqnarray}
 $R=0$, for other cases.
 Therefore we have
 \begin{equation}\label{R(V,U)U}
    R(V,U)U=\frac{1}{\nu}\{(a+2b)(\tilde{a}c-\tilde{c}a+2(\tilde{b}c-\tilde{c}b))\}z-2c\{\tilde{a}c-\tilde{c}a+2(\tilde{b}c-\tilde{c}b)\}y.
 \end{equation}
 Let $\delta:=\frac{1}{\nu}\{(a+2b)(\tilde{a}c-\tilde{c}a+2(\tilde{b}c-\tilde{c}b))\}$ and $\sigma:=-2c\{\tilde{a}c-\tilde{c}a+2(\tilde{b}c-\tilde{c}b)\}$. Now by using formula of $g_U$ given in \cite{EsSa} we have
 \begin{eqnarray}
            g_U(R(V,U)U,V) &=&  (\delta\tilde{c}\nu+\sigma(\tilde{b}+\frac{\tilde{a}}{2}))(-\frac{3}{2}pa+1)-\frac{3}{2}p\tilde{a}(\delta c\nu+\sigma(b+\frac{a}{2}))\\
              g_U(U,U) &=& (-\frac{3}{2}pa+1)^2 \\
              g_U(V,V) &=& 1-\frac{3}{2}pa+(-\frac{3}{2}p\tilde{a})^2 \\
              g_U(U,V) &=& -\frac{3}{2}p\tilde{a}(-\frac{3}{2}pa+1).
 \end{eqnarray}
Substitution of the last equations in (\ref{flag curvature}) will completes the proof.
\end{proof}

\begin{theorem}
Suppose that $F=\frac{\alpha^2}{\alpha-\beta}$ is the left invariant Matsumoto metric derived from case $(iii)$ of theorem (\ref{Classification}). Then, with the assumptions of the previous theorem, the formula for the flag curvature of $F$ is as follows

\begin{equation}\label{flag of Matsumoto}
    K(P,U)=-\frac{(2+3ap)^3(1+3ap)(\tilde{c}a-\tilde{a}c+2(\tilde{c}b-\tilde{b}c))^2}{2(4+18a^2p^2+18ap-27\tilde{a}^2p^2)}.
\end{equation}

\begin{proof}
The proof is similar to the previous theorem except that for considered Matsumoto metric by a direct computation we have
\begin{eqnarray}
            g_U(R(V,U)U,V) &=&  \frac{1}{(1+\frac{3}{2}ap)^4}\{(\sigma(\frac{\tilde{a}}{2}+\tilde{b})+\tilde{c}\delta\nu)(1+3ap)(1+\frac{3}{2}ap)\\
            &&-\frac{3}{2}\tilde{a}p(\sigma(\frac{a}{2}+b)+c\delta\nu)(1+6ap)\}\nonumber\\
              g_U(U,U) &=&  \frac{1}{(1+\frac{3}{2}ap)^2}\\
              g_U(V,V) &=&  \frac{1+\frac{9}{2}(a^2p^2-\tilde{a}^2p^2+ap)}{(1+\frac{3}{2}ap)^4}\\
              g_U(U,V) &=& \frac{-3\tilde{a}p}{2(1+\frac{3}{2}ap)^3}.
 \end{eqnarray}

\end{proof}

\end{theorem}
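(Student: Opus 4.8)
The plan is to proceed exactly as in the Randers case: exploit that $F$ is of Berwald type, so that the Chern connection and the flag curvature only see the Levi-Civita connection of the underlying Riemannian metric $g$, and hence the curvature tensor $R$ is the same one computed in the previous theorem. Concretely, I would first reuse the expression for $R(V,U)U$ obtained in (\ref{R(V,U)U}), namely $R(V,U)U=\delta z+\sigma y$ with
\[
\delta=\tfrac{1}{\nu}(a+2b)\bigl(\tilde a c-\tilde c a+2(\tilde b c-\tilde c b)\bigr),\qquad
\sigma=-2c\bigl(\tilde a c-\tilde c a+2(\tilde b c-\tilde c b)\bigr).
\]
Nothing about $R$ changes when we pass from the Randers to the Matsumoto metric; only the fundamental tensor $g_U$ changes, because $g_U$ is built from $F^2$ and $F$ now is $\alpha^2/(\alpha-\beta)$ instead of $\alpha+\beta$.

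The second step is to compute the four quantities $g_U(R(V,U)U,V)$, $g_U(U,U)$, $g_U(V,V)$, $g_U(U,V)$ for the Matsumoto metric. Here I would invoke the general formula for $g_U$ of an $(\alpha,\beta)$-metric in terms of $\alpha$, $\beta$ and their derivatives (the one quoted from \cite{EsSa}, as used in the Randers proof), specialized to $F=\alpha^2/(\alpha-\beta)$ and to the present data: the Riemannian metric $g$ is the matrix in case $(iii)$, the defining vector field is $\tilde X=-2px+py$ (the parallel left invariant field found in case $11$ of the table), and $\{U,V\}$ is $g$-orthonormal. The relevant scalar invariants are $\alpha(U)=1$ (unit vector), $\beta(U)=g(\tilde X,U)=g(-2px+py,\,ax+by+cz)$, which with the given matrix works out to $-\tfrac32 ap$, and similarly $\beta(V)=-\tfrac32\tilde a p$ and $g(\tilde X,\tilde X)=3p^2$. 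Plugging $\alpha(U)=1$ and $\beta(U)=-\tfrac32 ap$ into the $(\alpha,\beta)$-formula gives the stated values with the recurring factor $(1+\tfrac32 ap)^{-k}$; this is the part where one must be careful but it is a direct, if lengthy, substitution.

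The final step is purely algebraic: substitute the four $g_U$-values into the flag curvature formula (\ref{flag curvature}),
\[
K(P,U)=\frac{g_U(R(V,U)U,V)}{g_U(U,U)\,g_U(V,V)-g_U^2(U,V)},
\]
simplify the denominator $g_U(U,U)g_U(V,V)-g_U^2(U,V)$ — which collapses to $(1+\tfrac32 ap)^{-6}\bigl(1+\tfrac92(a^2p^2-\tilde a^2p^2+ap)\bigr)$ after the $-\tfrac94\tilde a^2p^2/(1+\tfrac32 ap)^6$ term from $g_U^2(U,V)$ is combined in, clear the common powers of $(1+\tfrac32 ap)$ between numerator and denominator, substitute back $\delta,\sigma$ (noting $\tilde a c-\tilde c a+2(\tilde b c-\tilde c b)$ is the common antisymmetric factor, squared after multiplying $g_U(R(V,U)U,V)$), and rescale by clearing the fractions $\tfrac32\to$ integer coefficients to land on the displayed form
\[
K(P,U)=-\frac{(2+3ap)^3(1+3ap)\bigl(\tilde c a-\tilde a c+2(\tilde c b-\tilde b c)\bigr)^2}{2\bigl(4+18a^2p^2+18ap-27\tilde a^2p^2\bigr)}.
\]
The main obstacle is not conceptual but bookkeeping: correctly evaluating $\beta(U),\beta(V),\|\tilde X\|^2$ with the non-diagonal matrix of case $(iii)$, and then tracking the powers of $(1+\tfrac32 ap)$ through the $(\alpha,\beta)$-formula for $g_U$ of the Matsumoto metric without sign or exponent errors; once those $g_U$-values are in hand (they are recorded in the statement), the flag curvature is immediate.
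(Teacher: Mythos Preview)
Your proposal is correct and follows essentially the same route as the paper: reuse the Riemannian curvature tensor $R(V,U)U=\delta z+\sigma y$ from the Randers proof (since Berwald implies the Chern and Levi-Civita connections coincide), replace only the fundamental tensor $g_U$ by its Matsumoto version, and substitute into the flag curvature formula. One small slip to fix in your writeup: the denominator $g_U(U,U)g_U(V,V)-g_U^2(U,V)$ simplifies to $(1+\tfrac32 ap)^{-6}\bigl(1+\tfrac92(a^2p^2+ap)-\tfrac{27}{4}\tilde a^2p^2\bigr)$, not $(1+\tfrac32 ap)^{-6}\bigl(1+\tfrac92(a^2p^2-\tilde a^2p^2+ap)\bigr)$ as you wrote --- the $-\tfrac94\tilde a^2p^2$ from $g_U^2(U,V)$ adds to the $-\tfrac92\tilde a^2p^2$ already in $g_U(V,V)$, which is exactly what produces the $-27\tilde a^2p^2$ (rather than $-18\tilde a^2p^2$) in the final denominator.
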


\bibliographystyle{amsplain}

\begin{thebibliography}{9}
\bibitem{AnDe} H. An and S. Deng, \emph{Invariant $(\alpha,\beta)-$Metrics on Homogeneous Manifolds}, Monatsh. Math. \textbf{154} (2008), 89--102.
\bibitem{AnInMa} P. L. Antonelli, R. S. Ingarden, M. Matsumoto, \emph{The Theory of Sprays
and Finsler Spaces with Applications in Physics and Biology},
Kluwer Academic Publishers, (1993).
\bibitem{As} G. S. Asanov, \emph{Finsler Geometry, Relativity and Gauge Theories},
D. Reidel Publishing Company, Dordrecht, Holland, (1985).
\bibitem{BaChSh} D. Bao, S. S. Chern and Z. Shen, \emph{An Introduction to Riemann-Finsler
Geometry}, (Berlin: Springer) (2000).
\bibitem{ChSh} S. S. Chern and Z. Shen, \emph{Riemann-Finsler Geometry}, (World Scientific) (2005).
\bibitem{DeHo} S. Deng, Z. Hou, \emph{Invariant Randers Metrics on Homogeneous Riemannian
Manifolds}, J. Phys. A: Math. Gen. \textbf{37} (2004), 4353--4360.
\bibitem{EsSa} E. Esrafilian and H. R. Salimi Moghaddam, \emph{Flag Curvature of Invariant Randers
Metrics on Homogeneous Manifolds}, J. Phys. A: Math. Gen.
\textbf{39} (2006) 3319--3324.
\bibitem{HaLee} K. Y. Ha and J. B. Lee, \emph{Left invariant metrics and curvatures on simply connected threedimensional
Lie groups}, Math. Nachr. \textbf{282}(6), (2009), 868–-898.
\bibitem{Ma1} M. Matsumoto, \emph{A slope of a mountain is a Finsler surface with respect to time
measure}, J. Math. Kyoto Univ. \textbf{29} (1989), 17–-25.
\bibitem{Ma2} M. Matsumoto, \emph{Theory of Finsler spaces with $(\alpha,\beta)-$metric},
Rep. Math. Phys. \textbf{31} (1992), 43--83.
\bibitem{Ra} G. Randers, \emph{On an Asymmetrical Metric in the Four-Space of General Relativity},
Phys. Rev. \textbf{59}(1941), 195--199.
\bibitem{Sa1} H. R. Salimi Moghaddam, \emph{Some Berwald Spaces of Non-positive flag Curvature}, Journal of Geometry and Physics \textbf{59} (2009) 969--975.
\bibitem{Sa2} H. R. Salimi Moghaddam, \emph{The Relation Between Automorphism Group
and Isometry Group of Randers Lie Groups}, Results in Mathematics \textbf{61}, Issue 1 (2012), Page 137-142, DOI 10.1007/s00025-010-0081-x.
\bibitem{Sa3} H. R. Salimi Moghaddam, \emph{Invariant Matsumoto metrics on homogeneous spaces}, Osaka Journal of Mathematics, to appear.
\bibitem{Sh} Z. Shen, \emph{Lectures on Finsler Geometry}, (World Scientific) (2001).

\end{thebibliography}

\end{document}